\documentclass[letterpaper]{amsart}



\usepackage{amssymb,latexsym}
\usepackage{verbatim}
\usepackage{url}
\usepackage[colorlinks,citecolor={blue},linkcolor={blue}]{hyperref}

\tolerance=1000

\theoremstyle{plain}
\newtheorem{theorem}{Theorem}[section]

\newtheorem{proposition}[theorem]{Proposition}

\newtheorem*{theorem*}{Theorem}
\theoremstyle{definition}

\theoremstyle{remark}

\numberwithin{equation}{section}





\newcommand{\Union}{\bigcup}

\newcommand{\spacing}{\hspace{5mm}}

\renewcommand{\bar}{\overline}

\renewcommand{\a}{\alpha}
\renewcommand{\epsilon}{\varepsilon}
\renewcommand{\phi}{\varphi}



\newcommand{\eqstack}[1]{\stackrel{\textup{(#1)}}{=}}

\newcommand{\geqstack}[1]{\stackrel{\textup{(#1)}}{\geq}}

\newcommand{\Ac}{\mathcal{A}}

\newcommand{\X}{\mathcal{X}}

\newcommand{\Normal}{\mathcal{N}}

\DeclareSymbolFont{AMSb}{U}{msb}{m}{n}
\DeclareMathSymbol{\N}{\mathbin}{AMSb}{"4E}
\DeclareMathSymbol{\Z}{\mathbin}{AMSb}{"5A}
\DeclareMathSymbol{\R}{\mathbin}{AMSb}{"52}
\DeclareMathSymbol{\Q}{\mathbin}{AMSb}{"51}
\DeclareMathSymbol{\PP}{\mathbin}{AMSb}{"50}
\renewcommand{\P}{\PP}
\DeclareMathSymbol{\E}{\mathbin}{AMSb}{"45}



\begin{document}

\title[A simple example of DPM inconsistency]{A simple example of Dirichlet process mixture inconsistency for the number of components}
\author[J. W. Miller]{Jeffrey W. Miller}
\author[M. T. Harrison]{Matthew T. Harrison}
\thanks{Division of Applied Mathematics, 
Brown University,
Providence, RI 02912}
\thanks{\url{jeffrey_miller@brown.edu}}

\begin{abstract}
For data assumed to come from a finite mixture with an unknown number of components, it has become common to use Dirichlet process mixtures (DPMs) not only for density estimation, but also for inferences about the number of components. The typical approach is to use the posterior distribution on the number of components occurring so far --- that is, the posterior on the number of clusters in the observed data. However, it turns out that this posterior is not consistent --- it does not converge to the true number of components. In this note, we give an elementary demonstration of this inconsistency in what is perhaps the simplest possible setting: a DPM with normal components of unit variance, applied to data from a ``mixture'' with one standard normal component. Further, we find that this example exhibits severe inconsistency: instead of going to 1, the posterior probability that there is one cluster goes to 0.
\end{abstract}

\keywords{}
\date{}

\maketitle

\section{Introduction}

It is well-known that Dirichlet process mixtures (DPMs) of normals are consistent for the density --- that is, given data from a sufficiently regular density $p_0$ the posterior converges to the point mass at $p_0$ (see \cite{Wu_2010,Ghosh_2003} for details and references). However, it is easy to see that this does not imply consistency for the number of components, since for example, a good estimate of the density might include superfluous components having vanishingly small weight.

Despite the fact that a DPM has infinitely many components with probability~$1$, it has become common to apply DPMs to data assumed to come from a finite mixture, and to apply the posterior on the number of components used to generate the observed data (in other words, the posterior on the number of clusters in the data) for inferences about the true number of components (see \cite{Escobar_1995,West_1994,Otranto_2002,Medvedovic_2002,Huelsenbeck_2007,Xing_2006,Lartillot_2004} for a few prominent examples). Thus, it is important to understand the properties of this procedure.

In this note, we give a simple example in which a DPM is applied to data from a finite mixture and the posterior distribution on the number of clusters does not converge to the true number of components. In fact, DPMs exhibit this type of inconsistency under very general conditions, as we will show elsewhere --- however, the aim of this note is brevity and clarity. To this end, we focus our attention on a special case that is as simple as possible: a ``standard normal DPM'', that is, a DPM using univariate normal components of unit variance, with a standard normal base measure (prior on component means).


Some authors have empirically observed that the DPM posterior tends to overestimate the number of components (e.g. \cite{West_1994,Lartillot_2004,Onogi_2011}, among others), and have found that ignoring tiny clusters tends to mitigate this issue. It might be possible to obtain consistent estimators in this way. However, by adopting such a procedure, one is abandoning the DPM model, and it is not clear what model (if any) would give rise to such a procedure.

A more natural way to obtain consistency is the following: if the number of components is unknown, put a prior on the number of components. For example, draw the number of components $s$ from a probability mass function $p(s)$ on $\{1,2,\dotsc\}$ with $p(s)>0$ for all $s$, draw mixing proportions $\pi=(\pi_1,\dotsc,\pi_s)$ from an $s$-dimensional Dirichlet (given $s$), draw component parameters $\theta_1,\dotsc,\theta_s$ i.i.d.\ (given $s$ and $\pi$) from an appropriate prior, and draw $X_1,X_2,\dotsc$ i.i.d.\ (given $s$, $\pi$, and $\theta_{1:s}$) from the resulting mixture. This approach has been widely used \cite{Nobile_1994,Richardson_1997,Green_2001,Nobile_2007}. Strictly speaking, as defined, such a model is not identifiable --- but it is fairly straightforward to modify it to be identifiable by choosing one representative from each equivalence class. Subject to a modification of this sort, it can be shown (see e.g. \cite{Nobile_1994}) that under very general conditions such models are (a.e.)\ consistent for the number of components, the mixing proportions, the component parameters, and the density (for data from a finite mixture of the chosen family). It is a common misperception that efficient (approximate) inference is more difficult in these models than in DPMs --- to the contrary, we have found that an appropriately constructed MCMC sampler for such a model is nearly identical to an MCMC sampler for a DPM. Further details will be provided elsewhere, since they are beyond the scope of this note.

The rest of the paper is organized as follows. In Section \ref{section:setup}, we define the DPM model under consideration. In Section \ref{section:simple}, we give an elementary proof of inconsistency for a standard normal DPM. In Section \ref{section:proof}, we show (using Hoeffding's strong law of large numbers for U-statistics) that this example is in fact severely inconsistent, in the sense that the posterior probability of the true number of components goes to~$0$.

\section{Setup}
\label{section:setup}

In this section, we define the Dirichlet process mixture model. 

\subsection{Dirichlet process mixture model}
\label{section:model}

The DPM model was introduced by Ferguson \cite{Ferguson_1983} for the purpose of Bayesian density estimation, and was made practical through the efforts of several authors (see \cite{Escobar_1998} and references therein).
We will use $p(\cdot)$ to denote probabilities under the DPM model (as opposed to other probability distributions that will be considered in what follows).
The core of the DPM is the so-called Chinese restaurant process (CRP), which defines a certain probability distribution on partitions.
Given $n\in\{1,2,\dotsc\}$ and $t\in\{1,\dotsc,n\}$, let $\Ac_t(n)$ denote the set of all \emph{ordered} partitions $(A_1,\dotsc,A_t)$ of $\{1,\dotsc,n\}$ into $t$ nonempty sets. In other words,
$$\Ac_t(n) =\Big\{(A_1,\dotsc,A_t): A_1,\dotsc,A_t \,\mbox{are disjoint,}\,\,
    \Union_{i = 1}^t A_i =\{1,\dotsc,n\},\,\,|A_i|\geq 1\,\,\forall i\Big\}. $$
The CRP with concentration parameter $\a>0$ defines a probability mass function on $\Ac(n)=\Union_{t = 1}^n \Ac_t(n)$ by setting
$$p(A) =\frac{\a^t}{\a^{(n)}\,t!}\prod_{i = 1}^t (|A_i|-1)!$$
for $A\in\Ac_t(n)$, where $\a^{(n)} =\a(\a +1)\cdots(\a + n -1)$. Note that since $t$ is a function of $A$, we have $p(A) = p(A,t)$. (It is more common to see this distribution defined in terms of unordered partitions $\{A_1,\dotsc,A_t\}$, in which case the $t!$ does not appear in the denominator --- however, for our purposes it is more convenient to use the distribution on ordered partitions $(A_1,\dotsc,A_t)$ obtained by uniformly permuting the parts. This does not affect the prior or posterior on $t$.)

Consider the hierarchical model
\begin{align}
& p(A,t) =p(A) =\frac{\a^t}{\a^{(n)}\,t!}\prod_{i = 1}^t (|A_i|-1)!, \label{equation:partitions}\\
& p(\theta_{1:t} \mid A,t) =\prod_{i = 1}^t p(\theta_i), \,\,\,\mbox{and} \notag\\
& p(x_{1:n} \mid \theta_{1:t},A,t) =\prod_{i = 1}^t \prod_{j\in A_i} p_{\theta_i}(x_j), \notag
\end{align}
where $p(\theta)$ is a prior on component parameters $\theta\in\Theta$, and $\{p_\theta:\theta\in\Theta\}$ is a parametrized family of distributions on $x\in\X$ for the components. Typically, $\X\subset\R^d$ and $\Theta\subset\R^k$ for some $d$ and $k$. Here, $x_{1:n} =(x_1,\dotsc,x_n)$ with $x_i\in\X$, and $\theta_{1:t} =(\theta_1,\dotsc,\theta_t)$ with $\theta_i\in\Theta$. The marginal distribution on $x_{1:n}$ is called a \emph{Dirichlet process mixture (DPM) model}.

The prior on the number of clusters $t$ under this model is $p_n(t) = \sum_{A\in\Ac_t(n)} p(A,t)$. We use $T_n$ (rather than $T$) to denote the random variable representing the number of clusters, as a reminder that its distribution depends on $n$. 
Note the distinction between the terms ``component'' and ``cluster'': a \emph{component} is part of a mixture distribution, while a \emph{cluster} is the set of (indices of) data points coming from a given component.

Since we are concerned with the posterior distribution $p(T_n = t \mid x_{1:n})$ on the number of clusters, we will be especially interested in the marginal distribution on $(x_{1:n},t)$, given by
\begin{align}
p(x_{1:n},T_n = t) & =\sum_{A\in\Ac_t(n)}\int p(x_{1:n},\theta_{1:t},A,t)\,d\theta_{1:t} \notag\\
& =\sum_{A\in\Ac_t(n)} p(A)\prod_{i = 1}^t 
    \int \Big(\prod_{j\in A_i} p_{\theta_i}(x_j)\Big) p(\theta_i)\,d\theta_i \notag\\
& =\sum_{A\in\Ac_t(n)} p(A)\prod_{i = 1}^t m(x_{A_i}) \label{equation:x_t}
\end{align}
where for any subset of indices $S\subset\{1,\dotsc,n\}$, we denote $x_S =(x_j: j\in S)$ and let $m(x_S)$ denote the single-cluster marginal of $x_S$,
\begin{align}
m(x_S) =\int\Big(\prod_{j\in S} p_\theta(x_j)\Big)\,p(\theta)\,d\theta. \label{equation:marginal}
\end{align}

\subsection{Specialization to the standard normal case}
\label{section:specialization}

In this note, for brevity and clarity, we focus on the univariate normal case with unit variance, with a standard normal prior on means --- that is, for $x\in\R$ and $\theta\in\R$,
\begin{align*}
& p_\theta(x) =\Normal(x \mid \theta,1) =\frac{1}{\sqrt{2\pi}}\exp(-\tfrac{1}{2}(x-\theta)^2),\spacing\mbox{and}\\
& p(\theta)=\Normal(\theta \mid 0,1)=\frac{1}{\sqrt{2\pi}}\exp(-\tfrac{1}{2}\theta^2).
\end{align*}
It is a straightforward calculation to show that the single-cluster marginal is then
\begin{align}
m(x_{1:n})
=\frac{1}{\sqrt{n+1}}\,p_0(x_{1:n})\exp\Big(\frac{1}{2}\frac{1}{n+1}\Big(\sum_{j = 1}^n x_j\Big)^2\Big),
\label{equation:normal_marginal} 
\end{align}
where $p_0(x_{1:n}) = p_0(x_1)\cdots p_0(x_n)$ (and $p_0$ is the $\Normal(0,1)$ density).
When $p_\theta(x)$ and $p(\theta)$ are as above, we refer to the resulting DPM as a \emph{standard normal DPM}.



\section{Elementary example of inconsistency}
\label{section:simple}

In this section, we prove the following result, exhibiting a simple example in which a DPM is inconsistent for the number of components: the true number of components is $1$, but the posterior probability of $T_n = 1$ does not converge to $1$. To keep it simple, we set $\a = 1$, but the proof extends trivially to any $\a>0$.

\begin{proposition}
If $X_1,X_2,\dotsc\sim\Normal(0,1)$ i.i.d.\ then with probability~$1$, under the standard normal DPM with $\a = 1$ as defined above, $p(T_n = 1 \mid X_{1:n})$ does not converge to~$1$ as $n\to\infty$.
\end{proposition}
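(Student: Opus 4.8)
To show that $p(T_n=1\mid X_{1:n})$ does not converge to $1$, it suffices to exhibit a single competing value of $t$ — say $t=2$ — and show that the posterior odds $p(T_n=2\mid X_{1:n})/p(T_n=1\mid X_{1:n})$ stays bounded away from $0$ along a subsequence, or better, is eventually bounded below by a positive constant. By the formula \eqref{equation:x_t}, this ratio equals
\[
\frac{p(X_{1:n},T_n=2)}{p(X_{1:n},T_n=1)}
=\frac{\sum_{A\in\Ac_2(n)} p(A)\,m(X_{A_1})\,m(X_{A_2})}{p(\{1,\dotsc,n\})\,m(X_{1:n})},
\]
and since the numerator is a sum over \emph{all} ordered two-block partitions, I only need a good \emph{lower} bound on it. The natural move is to keep just one well-chosen term: the partition that isolates a single index, say $A_1=\{n\}$, $A_2=\{1,\dotsc,n-1\}$. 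This reduces everything to a comparison of $m(X_{\{n\}})\,m(X_{1:n-1})$ against $m(X_{1:n})$, up to the explicit CRP weights $p(A)$.

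**Key steps.** First I would write down the CRP weights explicitly for $\a=1$: $p(\{1,\dots,n\}) = \frac{1}{n!}\cdot(n-1)! = 1/n$ (the single-block partition), while for the split $(\{n\},\{1,\dots,n-1\})$ the weight is $\frac{1}{n!}\cdot 0!\cdot(n-2)! = \frac{1}{n(n-1)}$; so the ratio of CRP weights is $1/(n-1)$. Second, using the closed form \eqref{equation:normal_marginal}, I would compute the ratio of single-cluster marginals. Writing $S_n=\sum_{j=1}^n X_j$, the $p_0$ factors cancel and one is left with
\[
\frac{m(X_{\{n\}})\,m(X_{1:n-1})}{m(X_{1:n})}
=\sqrt{\frac{n+1}{2n}}\,
\exp\!\Big(\tfrac12\Big(\tfrac{X_n^2}{2}+\tfrac{S_{n-1}^2}{n}-\tfrac{S_n^2}{n+1}\Big)\Big).
\]
Third, I would argue that the exponent, divided by $\log n$ or any growing quantity, behaves nicely: by the law of large numbers $S_n/n\to0$ a.s., so $S_{n-1}^2/n\to0$ and $S_n^2/(n+1)\to0$ a.s.; hence the exponent is $\tfrac{X_n^2}{4}+o(1)$ a.s. Combining, the posterior odds $p(T_n=2\mid X_{1:n})/p(T_n=1\mid X_{1:n})$ is at least (a constant times) $\frac{1}{n-1}\cdot\frac{1}{n}\cdot\sqrt{n}\cdot\exp(\tfrac{X_n^2}{4}+o(1))$ — which unfortunately goes to $0$ like $n^{-3/2}e^{X_n^2/4}$, and $X_n^2/4$ typically does not grow fast enough.

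**The obstacle, and the fix.** So the one-singleton split is too weak — the main difficulty is that isolating a single point costs a factor $\asymp 1/n$ from the CRP and only buys a bounded multiplicative gain in the marginal. The remedy is to isolate a \emph{block} of indices whose sample happens to look unusual, or — cleaner — to exploit that $\Ac_2(n)$ has exponentially many terms: summing over all splits $A_1=\{i\}$, $A_2=$ rest, for $i=1,\dots,n$, gives a sum of $n$ terms each $\asymp n^{-2}\cdot\text{(bounded)}$, still only $O(1/n)$. The genuinely robust approach, which I expect is what the authors do, is to pick a split into two blocks of \emph{comparable} size and extract from the exponential a term that grows like $n$ with a positive coefficient coming from the variance of the empirical means of the two halves. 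Concretely, for a balanced split $A_1,A_2$ with $|A_1|=|A_2|=n/2$, the exponent contributes $\tfrac14\big(\bar X_{A_1}^2 |A_1| + \bar X_{A_2}^2|A_2|\big) - \tfrac{(S_n)^2}{2(n+1)} \approx \tfrac{n}{8}\big(\bar X_{A_1}-\bar X_{A_2}\big)^2$, plus a square-root prefactor $\sqrt{(n+1)/((n/2+1)^2)}\asymp n^{-1/2}$, and the CRP weight for a balanced two-block split is $\asymp n^{-2}$. The key point is that we may \emph{choose} the split after seeing the data, e.g. put all $X_j$ above the median in $A_1$ and all below in $A_2$; then $(\bar X_{A_1}-\bar X_{A_2})^2$ is bounded below by a positive constant a.s. (for $\Normal(0,1)$ data the gap between the two conditional means is a fixed positive number $\approx 2\sqrt{2/\pi}$), so the exponential $\exp(cn)$ with $c>0$ swamps the polynomial losses, and the posterior odds in fact tend to $+\infty$. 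Thus not only does $p(T_n=1\mid X_{1:n})$ fail to converge to $1$, it converges to $0$ — which also foreshadows the stronger Section~\ref{section:proof} result. I would present the argument with this balanced, median-based split, the one delicate verification being the a.s.\ lower bound on the between-halves mean gap, handled by the law of large numbers applied separately to the truncated moments $\E[X\mathbf{1}(X>0)]$.
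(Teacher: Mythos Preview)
Your proposal contains two genuine errors. First, the singleton approach you abandon is exactly what the paper uses, and it works. For $A_1=\{i\}$ the CRP-weight ratio is $p(A)/p(\{1,\dots,n\})=\tfrac{1}{2(n-1)}$ (you dropped the $t!=2$), and the marginal ratio $m(X_{A_1})m(X_{A_2})/m(X_{1:n})=\sqrt{(n+1)/(2n)}\exp(\cdots)$ is bounded below by a positive quantity not depending on $n$ or $i$; hence each of the $n$ singleton terms contributes $\asymp 1/n$ to the posterior odds --- not $n^{-3/2}$ or $n^{-2}$ as you write --- and their sum is $\asymp 1$. The paper obtains the needed uniform lower bound on the marginal ratio by a convexity trick: writing $s_n/(n+2)$ as a convex combination of $s_{A_1}/(a_1+1)$ and $s_{A_2}/(a_2+1)$ yields $\tfrac{s_n^2}{n+1}\le \tfrac{s_{A_1}^2}{a_1+1}+\tfrac{s_{A_2}^2}{a_2+1}+\bar x_n^2$, so that $m(x_{A_1})m(x_{A_2})/m(x_{1:n})\ge \tfrac{\sqrt{n+1}}{\sqrt{2n}}\,e^{-\bar x_n^2/2}$ for \emph{every} singleton split simultaneously. (Your side claim that $S_n^2/(n+1)\to 0$ a.s.\ is also false: $S_n/\sqrt{n}\sim\Normal(0,1)$, so $S_n^2/(n+1)$ is $O_P(1)$ but does not converge.)

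Second, your proposed fix via the median-based split does not work, because the CRP weight of a \emph{single} balanced two-block partition is exponentially small, not $\asymp n^{-2}$: with $a_1=a_2=n/2$ one has $p(A)=\tfrac{((n/2-1)!)^2}{2\,n!}=\tfrac{1}{2(n/2)^2\binom{n}{n/2}}\asymp n^{-3/2}2^{-n}$. Your exponent $\tfrac{n}{8}(\bar X_{A_1}-\bar X_{A_2})^2\approx n/\pi$ for the sign split is correct, but since $1/\pi<\log 2$ we get $e^{n/\pi}\cdot 2^{-n}=e^{n(1/\pi-\log 2)}\to 0$. So this single term vanishes and the argument establishes neither the proposition nor the stronger conclusion $p(T_n=1\mid X_{1:n})\to 0$.
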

\begin{proof}
Let $n\in\{2,3,\dotsc\}$. 
Let $x_1,\dotsc,x_n\in\R$, $\,A\in\Ac_2(n)$, and $\,a_i =|A_i|$ for $i = 1,2$. Define $s_n =\sum_{j = 1}^n x_j$ and $\,s_{A_i} =\sum_{j \in A_i} x_j$ for $i = 1,2$.
Using Equation~\ref{equation:normal_marginal} and noting that $1/(n +1)\leq 1/(n +2) +1/n^2$, we have
\begin{align*}
\sqrt{n +1}\,\frac{m(x_{1:n})}{p_0(x_{1:n})}
  = \exp\Big(\frac{1}{2}\frac{s_n^2}{n +1}\Big)
\leq\exp\Big(\frac{1}{2}\frac{s_n^2}{n +2}\Big) \exp\Big(\frac{1}{2}\frac{s_n^2}{n^2}\Big).
\end{align*}
The second factor equals $\exp(\frac{1}{2} \bar x_n^2)$, where $\bar x_n =\frac{1}{n}\sum_{j = 1}^n x_j$.
Writing $s_n/(n +2)$ as a convex combination of $s_{A_1}/(a_1+1)$ and $s_{A_2}/(a_2+1)$, by the convexity of $x\mapsto x^2$ the first factor is less or equal to
\begin{align*}
\exp\Big(\frac{1}{2}\frac{s_{A_1}^2}{a_1+1} + \frac{1}{2}\frac{s_{A_2}^2}{a_2+1}\Big)
 = \sqrt{a_1 +1}\sqrt{a_2+1}\,\frac{m(x_{A_1})\,m(x_{A_2})}{p_0(x_{1:n})}.
\end{align*}
Hence,
\begin{align}
\frac{m(x_{1:n})}{m(x_{A_1})\,m(x_{A_2})}
\leq\frac{\sqrt{a_1 +1}\sqrt{a_2+1}}{\sqrt{n +1}} \exp(\tfrac{1}{2} \bar x_n^2).
\label{equation:inequality}
\end{align}
Consequently, we have
\begin{align*}
\frac{p(x_{1:n},T_n = 2)}{p(x_{1:n},T_n = 1)} 
& \eqstack{a} \sum_{A\in \Ac_2(n)} n\,p(A)\,\frac{m(x_{A_1})\,m(x_{A_2})}{m(x_{1:n})} \\
& \geqstack{b}\sum_{A\in \Ac_2(n)} n\,p(A)\,\frac{\sqrt{n+1}}{\sqrt{|A_1|+1}\sqrt{|A_2|+1}}\, \exp(-\tfrac{1}{2}\bar x_n^2)\\
& \geqstack{c}\sum_{\substack{A\in \Ac_2(n):\\|A_1|= 1}} 
    n\,\frac{(n-2)!}{n!\,2!}\,\frac{\sqrt{n+1}}{\sqrt{2}\sqrt{n}}\, \exp(-\tfrac{1}{2}\bar x_n^2)\\
&\geqstack{d}\frac{1}{2\sqrt 2} \, \exp(-\tfrac{1}{2}\bar x_n^2),
\end{align*}
where step (a) follows from applying Equation \ref{equation:x_t} to both numerator and denominator, plus using Equation \ref{equation:partitions} (with $\a = 1$) to see that $p(x_{1:n},T_n = 1) = m(x_{1:n})/n$, step (b) follows from Equation \ref{equation:inequality} above, step (c) follows since all the terms in the sum are nonnegative and $p(A) =(n-2)!/n!\,2!$ when $|A_1|= 1$ (by Equation \ref{equation:partitions}, with $\a = 1$), and step (d) follows since there are $n$ partitions $A\in\Ac_2(n)$ such that $|A_1|= 1$.

If $X_1,X_2,\dotsc\sim\Normal(0,1)$ i.i.d.\ then by the law of large numbers, $\bar X_n=\frac{1}{n}\sum_{j = 1}^n X_j\to 0$ almost surely as $n\to\infty$. Therefore, 
\begin{align*}
p(T_n = 1 \mid X_{1:n})
& = \frac{p(X_{1:n},T_n = 1)}{\sum_{t = 1}^\infty p(X_{1:n},T_n = t)}
\leq \frac{p(X_{1:n},T_n = 1)}{p(X_{1:n},T_n = 1)+p(X_{1:n},T_n = 2)} \\
&\leq\frac{1}{1+\tfrac{1}{2\sqrt 2}\exp(-\tfrac{1}{2}\bar X_n^2)}
\xrightarrow[]{\textup{a.s.}} \frac{1}{1+\tfrac{1}{2\sqrt 2}}.
\end{align*}
Hence, almost surely, $p(T_n = 1 \mid X_{1:n})$ does not converge to $1$.
\end{proof}

Note that the only property of the $\Normal(0,1)$ data distribution that we used was $\bar X_n=\frac{1}{n}\sum_{j = 1}^n X_j\to \E X_1 \in\R$. Thus, we could clearly have let $X_1,X_2,\dotsc$ be i.i.d.\ from any distribution with finite mean, and still $p(T_n = 1 \mid X_{1:n})$ would not converge to~$1$.

\section{Severe inconsistency}
\label{section:proof}

In the previous section, we showed that $p(T_n = 1 \mid X_{1:n})$ does not converge to~$1$ for a standard normal DPM on standard normal data. In this section, we prove that in fact, it converges to~$0$. This vividly illustrates that improper use of DPMs can lead to entirely misleading results. The key step in the proof is an application of Hoeffding's strong law of large numbers for U-statistics. The proof generalizes easily to any $\a>0$.

\begin{theorem}
If $X_1,X_2,\dotsc\sim\Normal(0,1)$ i.i.d.\ then 
$$ p(T_n = 1 \mid X_{1:n})\xrightarrow[]{\textup{Pr}} 0 \spacing\mbox{as }\,n\to\infty$$
under the standard normal DPM with concentration parameter $\a = 1$.
\end{theorem}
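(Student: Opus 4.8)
The plan is to show that not only does $p(T_n=1\mid X_{1:n})$ fail to converge to $1$, but the ratio $p(X_{1:n},T_n=2)/p(X_{1:n},T_n=1)$ actually diverges to $+\infty$, which forces $p(T_n=1\mid X_{1:n})\to 0$. To get a diverging lower bound on this ratio, I would refine the argument of the Proposition. There, we threw away almost everything, keeping only the $n$ partitions with $|A_1|=1$; the point now is to keep a macroscopic fraction of partitions --- say all $A\in\Ac_2(n)$ with $|A_1|=k$ for $k$ ranging up to some slowly growing cutoff, or more cleverly a balanced choice --- and to get a sharper lower bound on $m(x_{A_1})m(x_{A_2})/m(x_{1:n})$ than the crude convexity bound of Equation~\ref{equation:inequality}.

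Concretely, starting from step (a) of the Proposition's display, $p(X_{1:n},T_n=2)/p(X_{1:n},T_n=1) = \sum_{A\in\Ac_2(n)} n\,p(A)\,m(X_{A_1})m(X_{A_2})/m(X_{1:n})$. Using the exact formula \eqref{equation:normal_marginal} for the normal single-cluster marginal, the quantity $m(x_{A_1})m(x_{A_2})/m(x_{1:n})$ has a closed form: it equals $\frac{\sqrt{n+1}}{\sqrt{a_1+1}\sqrt{a_2+1}}\exp\big(\frac12\frac{s_{A_1}^2}{a_1+1}+\frac12\frac{s_{A_2}^2}{a_2+1}-\frac12\frac{s_n^2}{n+1}\big)$. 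Fix a small constant (or slowly growing) block size $k$ and restrict to partitions with $|A_1|=k$; then $a_2=n-k$, the combinatorial factor $n\,p(A)=n\cdot\frac{(k-1)!(n-k-1)!}{n!\,2!}$ behaves like $\frac{1}{2k n^{k-1}}$ up to constants, and there are $\binom nk$ such ordered partitions, so the count roughly cancels the $n^{k-1}$. The expression then reduces, after averaging over which $k$ indices form $A_1$, to something like a constant times $\frac{1}{k^{3/2}}\,\frac{1}{\binom nk}\sum_{|S|=k} \exp\big(\frac12\frac{s_S^2}{k+1}\big)$ times a factor $\exp(-\frac12 s_n^2/(n+1))\to 1$ a.s.; the inner average is exactly a U-statistic of degree $k$ with bounded, integrable kernel $h(x_1,\dots,x_k)=\exp\big(\frac{1}{2(k+1)}(x_1+\cdots+x_k)^2\big)$.

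By Hoeffding's strong law of large numbers for U-statistics, that U-statistic converges almost surely (and hence in probability) to $\E\,h(X_1,\dots,X_k) = \E\exp\big(\frac{1}{2(k+1)}(X_1+\cdots+X_k)^2\big)$, where $X_1+\cdots+X_k\sim\Normal(0,k)$; a direct Gaussian moment-generating-function computation gives $\E\exp(\tfrac{1}{2(k+1)}Z^2) = (1 - \tfrac{k}{k+1})^{-1/2} = \sqrt{k+1}$ for $Z\sim\Normal(0,k)$. So for each fixed $k$, the contribution of the $|A_1|=k$ partitions to the ratio converges in probability to a positive constant proportional to $\sqrt{k+1}/k^{3/2}\asymp 1/k$. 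Summing these contributions over $k=1,2,\dots,K$ (all nonnegative) and letting $K\to\infty$, the harmonic-type divergence $\sum 1/k=\infty$ shows the ratio is eventually larger than any fixed threshold with probability tending to $1$; hence $p(T_n=1\mid X_{1:n})\le 1/(1+\text{ratio})\xrightarrow{\textup{Pr}}0$.

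The main obstacle is the interchange of limits: Hoeffding's theorem is applied for each fixed $k$, but I need a lower bound on a sum over $k$ up to a growing cutoff $K=K_n$, so I must argue carefully --- e.g., fix $K$ first, apply the U-statistic SLLN to each of the finitely many $k\le K$ to conclude the ratio exceeds (a constant times) $\sum_{k\le K}1/k$ minus $o(1)$ with probability $\to1$, then send $K\to\infty$ after $n\to\infty$. Controlling the error terms uniformly --- the discrepancy between $n\,p(A)$ and its leading asymptotic, and the $\exp(-\frac12 s_n^2/(n+1))$ factor --- is routine but needs to be done with the $k$-dependence tracked, since the combinatorial constants degrade as $k$ grows. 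A secondary point to verify is that the kernel $h$ for each $k$ is genuinely integrable (it is, since $k/(k+1)<1$), so Hoeffding's hypotheses hold.
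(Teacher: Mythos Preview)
Your approach is essentially the paper's: both reduce to Hoeffding's strong law for U-statistics with kernel $\exp\big(\tfrac{1}{2(k+1)}(x_1+\cdots+x_k)^2\big)$ at each fixed $k$, and both extract divergence from the harmonic sum $\sum_{k} 1/(2k)$ via the fix-$K$-then-let-$K\to\infty$ device you describe. The paper organizes things by normalizing numerator and denominator separately by $p_0(X_{1:n})\,n^{-3/2}$ to form $R_1,R_2$ and showing $R_2\to\infty$ a.s.\ while $R_1=O_P(1)$, but the substance is the same; in particular the paper's kernel $h(x_S)=m(x_S)/p_0(x_S)$ has $\E h(X_{1:k})=1$, which is your $\sqrt{k+1}$ after absorbing the normalizing constant.

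There is one genuine slip. Your claim that $\exp\big(-\tfrac12\, s_n^2/(n+1)\big)\to 1$ a.s.\ is false: since $s_n/\sqrt n\sim\Normal(0,1)$ for every $n$, the exponent $s_n^2/(n+1)$ does not tend to $0$ in any mode; it is tight with limiting law $\chi^2_1$. (This factor does not depend on $k$, so ``tracking the $k$-dependence'' is not the issue here.) What is true, and sufficient, is that this factor is bounded away from $0$ in probability, i.e.\ its reciprocal is $O_P(1)$. That is exactly the paper's $R_1=O_P(1)$ step, and it is the reason the theorem asserts convergence in probability rather than almost surely. With this correction your argument goes through: given $\epsilon>0$, pick $c>0$ with $\P\big(\exp(-\tfrac12\,s_n^2/(n+1))>c\big)\ge 1-\epsilon$ for all $n$, then choose $K$ so that $c\sum_{k\le K}\tfrac{1}{2k}$ exceeds any prescribed bound.
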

\begin{proof}
For $t = 1$ and $t = 2$ define
$$ R_t(X_{1:n}) = n^{3/2}\,\frac{p(X_{1:n},T_n = t)}{p_0(X_{1:n})} . $$
(For general $\a>0$, replace $n^{3/2}$ above by $n^{3/2} \a^{(n)}/n!$.)
Our method of proof is as follows. We will show that
$$R_2(X_{1:n}) \xrightarrow[n\to\infty]{\textup{Pr}} \infty$$
(or in other words, for any $B>0$ we have $\P(R_2(X_{1:n})>B)\to 1$ as $n\to\infty$),
and we will show that $R_1(X_{1:n})$ is bounded in probability:
$$R_1(X_{1:n}) = O_P(1)$$
(or in other words, for any $\epsilon>0$ there exists $B_\epsilon>0$ such that
$\P(R_1(X_{1:n})>B_\epsilon)\leq\epsilon$ for all $n\in\{1,2,\dotsc\}$).
Putting these two together, we will have
\begin{align*}
p(T_n = 1 \mid X_{1:n})
& = \frac{p(X_{1:n},T_n = 1)}{\sum_{t = 1}^\infty p(X_{1:n},T_n = t)}
\leq \frac{p(X_{1:n},T_n = 1)}{p(X_{1:n},T_n = 2)}
=\frac{R_1(X_{1:n})}{R_2(X_{1:n})} \xrightarrow[n\to\infty]{\textup{Pr}} 0.
\end{align*} 

First, let's show that $R_2(X_{1:n})\to\infty$ in probability.
For $S\subset\{1,\dotsc,n\}$ with $|S|\geq 1$, define $h(x_S)$ by
$$h(x_S) =\frac{m(x_S)}{p_0(x_S)}
= \frac{1}{\sqrt{|S|+1}}\exp\Big(\frac{1}{2}\frac{1}{|S|+1}\Big(\sum_{j\in S} x_j\Big)^2\Big), $$
where $m$ is the single-cluster marginal as in Equations \ref{equation:marginal} and \ref{equation:normal_marginal}. 
Note that when $1\leq|S|\leq n-1$, we have $\sqrt n \,h(x_S)\geq 1$. Note also that $\E h(X_S) = 1$ since
$$\E h(X_S) = \int h(x_S)\,p_0(x_S)\,dx_S =\int m(x_S)\,dx_S = 1,$$
using the fact that $m(x_S)$ is a density with respect to Lebesgue measure.
For $k\in\{1,\dotsc,n\}$, define the U-statistics
$$ U_k(X_{1:n})=\frac{1}{{n\choose k}}\sum_{|S|= k} h(X_S)$$
where the sum is over all $S\subset\{1,\dotsc,n\}$ such that $|S|= k$. 
By Hoeffding's strong law of large numbers for U-statistics \cite{Hoeffding_1961},
$$ U_k(X_{1:n}) \xrightarrow[n\to\infty]{\textup{a.s.}} \E h(X_{1:k}) = 1 $$
for any $k\in\{1,2,\dotsc\}$. 
Therefore, using Equations \ref{equation:partitions} and \ref{equation:x_t} we have that for any $K\in\{1,2,\dotsc\}$ and any $n>K$,
\begin{align*}
R_2(X_{1:n}) 
& = n^{3/2}\sum_{A\in\Ac_2(n)} p(A) \,\frac{m(X_{A_1})\,m(X_{A_2})}{p_0(X_{1:n})}\\
& =  n\sum_{A\in\Ac_2(n)} p(A) \sqrt n \,h(X_{A_1})\,h(X_{A_2})\\
&\geq n\sum_{A\in\Ac_2(n)} p(A)\, h(X_{A_1})\\
& = n\sum_{k=1}^{n-1}\sum_{|S|= k} \frac{(k-1)!\,(n-k-1)!}{n!\,2!}\,\,h(X_S) \displaybreak[0]\\
& =\sum_{k=1}^{n-1} \frac{n}{2k(n-k)}\,\frac{1}{{n\choose k}} \sum_{|S|= k} h(X_S)\\
& =\sum_{k=1}^{n-1} \frac{n}{2k(n-k)}\,\, U_k(X_{1:n})\\
&\geq\sum_{k=1}^K \frac{n}{2k(n-k)}\,\, U_k(X_{1:n})\\
&\xrightarrow[n\to\infty]{\textup{a.s.}} \sum_{k=1}^K \frac{1}{2k}
=\frac{H_K}{2} > \frac{\log K}{2}
\end{align*}
where $H_K$ is the $K^\mathrm{th}$ harmonic number, and the last inequality follows from the standard bounds \cite{Graham_1989} on harmonic numbers: $\log K<H_K\leq\log K+1$.
Hence, for any $K$,
$$\liminf_{n\to\infty} R_2(X_{1:n})>\frac{\log K}{2} \spacing\mbox{almost surely,}$$
and it follows easily that
$$ R_2(X_{1:n})\xrightarrow[n\to\infty]{\textup{a.s.}} \infty. $$
Convergence in probability is implied by almost sure convergence.

Now, let's show that $R_1(X_{1:n}) = O_P(1)$. By Equation \ref{equation:partitions}, $p(A) = 1/n$ when $A =(\{1,\dotsc,n\})$. Using this along with Equations \ref{equation:x_t} and \ref{equation:normal_marginal}, we have
\begin{align*}
R_1(X_{1:n}) &= n^{3/2}\,\frac{p(X_{1:n},T_n = 1)}{p_0(X_{1:n})} 
=\sqrt n\,\frac{m(X_{1:n})}{p_0(X_{1:n})}\\
& =\frac{\sqrt n}{\sqrt{n +1}}\,\exp\Big(\frac{1}{2}\frac{n}{n +1}\,\Big(\frac{1}{\sqrt n}\sum_{i = 1}^n X_i\Big)^2\Big)
 \leq \exp (Z_n^2/2)
\end{align*}
where $Z_n=(1/\sqrt n)\sum_{i = 1}^n X_i\sim\Normal(0,1)$ for each $n\in\{1,2,\dotsc\}$. Since $Z_n = O_P(1)$ then we conclude that $R_1(X_{1:n}) = O_P(1)$.
This completes the proof. 
\end{proof}

\section*{Acknowledgments}
We would like to thank Stu Geman for raising this question. 
This research was supported in part by the National Science Foundation under grant DMS-1007593 and the Defense Advanced Research Projects Agency under contract FA8650-11-1-715.

\bibliography{references}
\bibliographystyle{amsplain}

\end{document}